\definecolor{myurlcolor}{rgb}{0,0,0.4}
\definecolor{mycitecolor}{rgb}{0,0.5,0}
\definecolor{myrefcolor}{rgb}{0.5,0,0}
\tikzset{->-/.style={decoration={markings,mark=at position .5 with {\arrow{>}}},postaction={decorate}}}
\tikzset{-<-/.style={decoration={markings,mark=at position .5 with {\arrow{<}}},postaction={decorate}}}
\tikzstyle{box}=[fill=white, draw=black, shape=rectangle, inner sep=14pt]
\tikzstyle{forward arrow}=[->-]
\tikzstyle{backward arrow}=[-<-]
\newcommand{\beq}{\begin{equation}}
\newcommand{\eeq}{\end{equation}}
\newcommand{\N}{\mathbb{N}}
\newcommand{\R}{\mathbb{R}}
\newcommand{\eps}{\varepsilon}
\renewcommand{\P}[2][]{\mathbb{P}_{#1}\left[#2\right]}
\newcommand{\E}[2][]{\mathbb{E}_{#1}\left[#2\right]}
\newtheorem{dummy}{Dummy}[section]
\newtheorem{theorem}[dummy]{Theorem}
\newtheorem{lemma}[dummy]{Lemma}
\newtheorem{prop}[dummy]{Proposition}
\newtheorem{nota}[dummy]{Notation}
\newtheorem{prob}[dummy]{Problem}
\newtheorem{game}[dummy]{Game}
\newtheorem{conj}[dummy]{Conjecture}
\theoremstyle{remark}
\newtheorem{ex}[dummy]{Example}
\newtheorem{remark}[dummy]{Remark}
\numberwithin{equation}{section}
\theoremstyle{definition}
\Crefname{equation}{}{}		
\Crefname{theorem}{Theorem}{Theorems}
\Crefname{lemma}{Lemma}{Lemmas}
\Crefname{prop}{Proposition}{Propositions}
\Crefname{cor}{Corollary}{Corollaries}
\Crefname{defn}{Definition}{Definitions}
\Crefname{nota}{Notation}{Notations}
\Crefname{prob}{Problem}{Problems}
\Crefname{game}{Game}{Games}
\Crefname{conj}{Conjecture}{Conjectures}
\Crefname{ex}{Example}{Examples}
\Crefname{remark}{Remark}{Remarks}
\Crefname{note}{Note}{Notes}
\let\originalleft\left
\let\originalright\right
\renewcommand{\left}{\mathopen{}\mathclose\bgroup\originalleft}
\renewcommand{\right}{\aftergroup\egroup\originalright}
\setlist[enumerate]{label=(\roman*),itemsep=5pt,topsep=8pt}
\setlist[itemize]{label=$\triangleright$,itemsep=5pt,topsep=6pt}
\newcommand{\newterm}[1]{\textbf{#1}}
\begin{document}



\title[Maximizing the probability of a linear inequality]{On maximizing the probability of a linear inequality between iid random variables}

\author{Pierre C Bellec}

\address{Department of Statistics, Rutgers University, USA}
\email{pierre.bellec@rutgers.edu}

\author{Tobias Fritz}

\address{Department of Mathematics, University of Innsbruck, Austria}
\email{tobias.fritz@uibk.ac.at}

\keywords{}

\subjclass[2020]{Primary: 53C12; Secondary: 13N15, 58C25}

\thanks{\textit{Acknowledgements.} We thank Stephan Eckstein, Tom Hutchcroft, Daniel Lacker, Lyuben Lichev, Gonzalo Muñoz, Will Sawin, Omer Tamuz and Vincent Yu for generously sharing their ideas, pointers to the literature, and comments on a draft.}

\begin{abstract}
A casino offers the following game. There are three cups each containing a die.	You are being told that the dice in the cups are all the same, but possibly nonstandard.
After you pay \$1 to play, the croupier shakes all three cups and lets you choose one of them. You win \$2 if the die in your cup displays at least the average of the other two, and you lose otherwise. Is this game in your favor? If not, how should the casino design the dice to maximize their profit?

This problem is a special case of the following more general question: given a measurable space $X$ and a bounded measurable function $f : X^n \to \R$, how large can the expectation of $f$ under probability measures of the form $\mu^{\otimes n}$ be? We develop a general method to answer this kind of question. As an example application that is harder than the casino problem, we show that the maximal probability of the event $X_1 + X_2 + X_3 < 2 X_4$ for nonnegative iid random variables lies between $0.400695$ and $0.417$, where the upper bound is obtained by mixed integer linear programming. We conjecture the lower bound to be the exact value.
\end{abstract}

\newgeometry{top=2cm}
\maketitle
\tableofcontents

This paper studies the problem of maximizing the probability of a strict linear inequality
between iid random variables $X_1,X_2,\dots$, where the maximization is over the common distribution $\mu$.
A curious reader seeking new problems might consider the exercise
of proving $\sup_{\mu} \P{2 X_1< X_2 + X_3}=\frac23$,
or the open problem of determining
$\sup_{\mu} \P{X_1 + X_2 + X_3< 2 X_4}$, for which we will provide
bounds in \Cref{example} below.
Before that, let us consider the exercise problem by first narrating it in terms of betting games in a casino, which will lead us to its solution.

\section{The Beat the Average game}
\label{beat_the_average}

If we throw two identical dice $A$ and $B$, then they are equally likely to beat each other:
\begin{equation}
	\label{eq:two_dice_beat}
	\P{A \ge B} = \P{B \ge A}.
\end{equation}
This is obvious not just for standard dice, where the outcomes are $1, \dots, 6$ with equal probability, but also for any other kind of ``non-standard'' dice as long as they are identical.
Formally,~\eqref{eq:two_dice_beat} can be seen by noting that the joint distribution is invariant under swapping the two dice.
In addition, since at least one of the two events $A \ge B$ and $B \ge A$ always happens, it follows that their probability is $\ge \frac{1}{2}$.
In fact, it is a nice exercise in elementary probability to show that
\[
	\P{A \ge B} = \P{B \ge A} = \frac{1 + \P{A = B}}{2}.
\]
Now what happens when we have three identical dice, and we ask whether die $A$ beats the \emph{average} of dice $B$ and $C$? That is, what can we say about the probability
\[
	\P{A \ge \tfrac{B + C}{2}} \: ?
\]
To start, let us consider standard dice first.
In this case, there is another nice symmetry: the distribution of outcomes for a single die is symmetric around its expectation value $3 \frac{1}{2}$.
This implies that we have similar properties as for~\eqref{eq:two_dice_beat}, namely
\begin{equation}
	\label{eq:three_dice_beat}
	\P{A \ge \tfrac{B + C}{2}} = \P{A \le \tfrac{B + C}{2}} > \frac{1}{2}.
\end{equation}
The probability is strictly bounded by $\frac{1}{2}$ because both events occur if $A = \tfrac{B + C}{2}$, and this happens with nonzero probability.

What can we say if the dice are possibly non-standard?
Let's think about this in more entertaining terms:

\begin{game}[Beat the Average, dice version]
	\label{game:beat_the_average}
	There are three cups containing three identical dice.
	After you pay \$1 to play, the croupier shakes all three cups and lets you choose one of them.
	You win \$2 if the die in your cup displays at least the average of the other two, and you lose otherwise.
\end{game}

A first observation is that because of the symmetry, the choice of cup does not matter: we may as well always choose the cup on the left in every round.
The purpose of letting you choose is merely to give you the illusion of control.
Mathematically, deciding whether the game is in your favour reduces to analyzing the first probability in~\eqref{eq:three_dice_beat}.
As we noted above, this probability is $> \frac{1}{2}$ for standard dice.
Thus if the dice are standard dice, then you should play!

But of course, you are smart enough to know that the casino would not offer this game if it was in your favour.
So what tricks could they have up their sleeve to rip you off?
Short of more dubious practices, the only way to turn the tables in their favour is to choose suitable dice.
But which dice should they use in order to maximize the probability of the
the event $A < \tfrac{B + C}{2}$ that you lose, and how large can they make this probability?
Now we have a mathematical problem on our hands:

\begin{prob}[Casino manager problem]
	\label{prob:optimizing_dice}
	What is the largest probability of $A < \tfrac{B + C}{2}$, given that $A, B, C$ are $\N$-valued iid random variables?
	And which distribution achieves this maximum, if it is achievable?
\end{prob}

Here, we have formalized a die as a distribution over natural number outcomes, which amounts to allowing dice with infinitely many sides.
But as we will see, the maximal probability can be approached by a sequence of dice with an increasing finite number of sides.

As mathematicians, we may want to start thinking about the most degenerate case first: what happens for \emph{one-sided} dice?
Or equivalently, with dice which display the same number on all sides?
In this case, our random variables are deterministic, and the iid assumption implies that they must be equal.
Thus the probability in question is $0$: the player always wins!
Using one-sided dice, or dice displaying the same number on all sides, is the worst thing to do for the casino.

For two-sided dice, we can take the outcomes to be $0$ and $1$ without loss of generality.
Let's say that throwing such a die gives $1$ with probability $p$ and $0$ with probability $1 - p$.
Then the casino wins the game if $A = 0$ and ($B = 1$ or $C = 1$), resulting in
\begin{equation}
	\P{A < \tfrac{B + C}{2}} = (1 - p) (1 - (1 - p)^2) = p^3 - 3 p^2 + 2 p.
        \label{three_by_eight}
\end{equation}
This function with domain $[0,1]$ has its maximum at $p = 1 - \frac{1}{\sqrt{3}}$, where it evaluates to
\[
	\P{A < \tfrac{B + C}{2}} = \frac{2}{3\sqrt{3}} \approx 0.385.
\]
So with two-sided dice, the casino can do better than with one-sided ones, but will still end up losing on average.

Determining the optimal three-sided dice is already a more involved calculation and we will not perform it here.
But it is instructive to calculate the winning probability for a three-sided dice with values $1, 2, 4$ having probability $\frac{1}{3}$ each.
There are $3^3 = 27$ possible outcomes in this case, and the winning ones for the casino are precisely the following combinations:
\begin{center}
	\begin{tabular}{c|ccccccccccccc}
		$A$ & 1 & 1 & 1 & 1 & 1 & 1 & 1 & 1 & 2 & 2 & 2 & 2 & 2 \\\hline
		$B$ & 1 & 1 & 2 & 2 & 2 & 4 & 4 & 4 & 1 & 2 & 4 & 4 & 4 \\\hline
		$C$ & 2 & 4 & 1 & 2 & 4 & 1 & 2 & 4 & 4 & 4 & 1 & 2 & 4
	\end{tabular}
\end{center}
Hence for dice like this, we have
\begin{equation}
	\P{A < \tfrac{B + C}{2}} = \frac{13}{27} \approx 0.481.
	\label{thirteen_by_twenty-seven}
\end{equation}
So in this case, the casino is almost breaking even, while the player still has a small edge left.
If the value $4$ on the dice was a $3$ instead, then we would have a symmetric distribution again and the game would be even better for the player.
This underlines that in order to optimize the dice, the casino should aim for a certain skewness in the distribution.

It still remains unclear whether you are safe playing the game, or whether the casino can rig the dice in their favour.
In order to finally solve this problem, let's consider a slightly different game first.

\begin{game}[Beat the Average, card version]
    You pay \$1 to play.
    The croupier then shuffles a deck of cards with distinct values, draws three without replacement, and lets you choose one of them.
	You get \$2 if the value of your card is at least the average of the other two, and you lose otherwise.
\end{game}

Similar to the dice version, the casino can try to stack the deck in their favour by choosing the cards in a clever way.
For example, suppose that the deck contains only three cards---the minimum for the game to make sense---with values $1,2,4$.
Then there are only six possible outcomes in total, among which the winning combinations for the casino are:
\begin{center}
	\begin{tabular}{c|cccc}
		$A$ & 1 & 1 & 2 & 2 \\\hline
		$B$ & 2 & 4 & 1 & 4 \\\hline
		$C$ & 4 & 2 & 4 & 1
	\end{tabular}
\end{center}
Thus, in this setting, the player has a winning probability
$\P{A \ge \frac{B+C}{2}}$
of only $\frac{1}{3}$!
An excellent \emph{deal} for the casino.

For the card version, determining the winning probability for a given deck is a combinatorial problem.
If the deck consists of $m$ cards, then the game has
\[
	\binom{m}{3} \cdot 3
\]
possible outcomes, corresponding to the possible choices of three cards times the number of ways to assign one of them to the player.
So to determine the winning probability for the casino, we need to count how many of these combinations are winning.
How large can this number get?
Or as the player, at which bet value can you be confident that the game is in your favour, regardless of the deck used?
Here's the result.

\begin{prop}
	\label{prop:card_minimal}
	In the card version, the largest winning probability
        $\P{A < \frac{B+C}{2}}$
        for the casino is $\frac{2}{3}$, independently of the number of cards $m \ge 3$.
\end{prop}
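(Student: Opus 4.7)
The plan is to reduce the question to a per-triple analysis. Condition on the unordered triple $T = \{x,y,z\}$ with $x < y < z$ that the game master draws, and observe that by the symmetry of the drawing procedure, the player's card $A$ is uniform on $T$ while $\{B,C\}$ is the remaining pair. So the conditional casino-winning probability $\P{A < \frac{B+C}{2} \mid T}$ is exactly $\frac{1}{3}$ times the number of elements $a \in T$ with $a < \frac{(\text{sum of }T) - a}{2}$, i.e.\ with $3a < x+y+z$.

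Next I would check case-by-case which elements of $T$ satisfy this:
\begin{itemize}
\item $a = x$: since $x < y \le \tfrac{y+z}{2}$, this is always a casino win;
\item $a = z$: since $z > y \ge \tfrac{x+y}{2}$, this is never a casino win;
\item $a = y$: this is a casino win if and only if $2y < x+z$, i.e.\ $z - y > y - x$.
\end{itemize}
Therefore the conditional winning probability is $\tfrac{2}{3}$ when $2y < x+z$ and $\tfrac{1}{3}$ otherwise. Averaging over $T$ gives the global bound $\P{A < \tfrac{B+C}{2}} \le \tfrac{2}{3}$ for any deck, which takes care of the upper bound.

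For achievability, equality holds if and only if every drawn triple $\{x,y,z\}$ with $x<y<z$ satisfies the strict convexity condition $2y < x+z$. I would exhibit such a deck by taking the values to grow geometrically fast, e.g.\ the $m$ cards labelled $v_i := 2^i$ for $i = 1,\dots,m$. Then for any $i < j < k$,
\[
v_i + v_k \;=\; 2^i + 2^k \;\ge\; 2^{k} \;\ge\; 2^{j+1} \;>\; 2\cdot 2^j \;=\; 2 v_j,
\]
so every triple contributes $\tfrac{2}{3}$ to the average and the bound is attained for every $m \ge 3$.

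There is no real obstacle here: the decomposition into ``which of the three cards is the player's'' reduces everything to the elementary fact that the largest element of a triple can never beat the average of the other two, while the smallest always does; the whole game then hinges on the middle element. The only modest step is producing a deck attaining the bound, for which any super-convex sequence such as $\{2^i\}$ or $\{i!\}$ works.
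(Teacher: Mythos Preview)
Your proof is correct and matches the paper's approach: the upper bound is the same per-triple observation that at most two of the three choices for $A$ can satisfy $A < \tfrac{B+C}{2}$ (the paper phrases this equivalently by summing the three indicator functions), and both constructions use powers of $2$ for achievability. One cosmetic slip in your displayed chain: $2^{j+1} > 2\cdot 2^j$ should be an equality, with the required strictness instead coming from the first step $2^i + 2^k > 2^k$, which you wrote as $\ge$.
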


\begin{proof}
	To achieve this number, we can use a deck with values generalizing the example above, namely
	\begin{equation}
		1, \: 2, \: 4, \: \dots, \: 2^{m-1}.
                \label{values_cards}
	\end{equation}
	To determine the number of winning combinations for the casino, note that for any triple of such numbers, the inequality $2 A < B + C$ is equivalent to $A < \max(B,C)$.
	There are exactly four ways that this holds for every unordered triple of cards, namely with the arrangements
	\[
		A < B < C, \qquad A < C < B, \qquad B < A < C, \qquad C < A < B.
	\]
	Therefore this choice of deck indeed achieves a winning probability of $\frac{2}{3}$.

	There is a neat trick, based on the probabilistic method~\cite{AS}, to show that the winning probability cannot be higher than $\frac{2}{3}$.
	Namely if we have a deck with $m$ cards, then for any $m' < m$, we can randomly pick a subset of $m'$ cards and discard the rest.
	In this way, we obtain a deck with $m'$ cards that we can play the game with.
	Now the winning probability for the original $m$-card deck is the expectation of the winning probabilities for all these decks with $m'$ many cards.
	Therefore, for \emph{at least one of these}, the winning probability must be at least as high as for the original deck.
        Since we know that the winning probability
        $\P{A < \frac{B+C}{2}}$
        cannot exceed $\frac{2}{3}$ for $m = 3$, using $m' = 3$ shows that this is still the case for any $m \ge 3$.

        An alternative (but essentially equivalent) argument is
        \begin{equation}
        3 \, \P{A < \tfrac{B+C}{2}}
        =
        \E{
            I\{A < \tfrac{B+C}{2} \}
            +
            I\{B < \tfrac{A+C}{2} \}
            +
            I\{C < \tfrac{A+B}{2} \}
        }
        \le 2,
        \label{probabilistic_method_indicators}
        \end{equation}
        where $I\{\cdot\}$ denotes the indicator function of the event in question.
	This equality holds because the joint distribution is invariant under permutations of the variables.
        The inequality holds because only two of the three indicator functions
        inside the expectation can be one
        simultaneously, as the sum of the three inequalities
        gives the contradiction $A + B + C < A + B + C$.
\end{proof}

In this way, we have 
solved the card version of the Beat the Average game: you should play if the casinos offers more than three times your bet if you win, while if they offer less then the casino can rip you off by making a clever choice of deck.

But how about the dice version?
We still don't know whether the casino can rig the dice in their favour!
Fortunately, having solved the card version will let us solve the dice version as well.
To note the relation, we may think of the dice version as a limiting case of the card version obtained by replacing the deck by a large number of copies of itself.\footnote{We can also consider the dice version as like the card version with replacement: a drawn card is immediately put back into the deck and may be drawn again. But we will not need this point of view.}
But more importantly, we can use the probabilistic method in a similar way as in the proof of \cref{prop:card_minimal} to solve the casino manager problem.

\begin{theorem}
	\label{theorem:dice_optimal}
        For any $m \ge 1$, the die with values \eqref{values_cards}
	and equal outcome probabilities achieves
        \begin{equation}
		\label{eq:dice_lower_bound}
		\P{A < \frac{B+C}{2}}
		= \frac{2}{3} - \frac{1}{2m} - \frac{1}{6m^2}.
	\end{equation}
	On the other hand, 
        every iid non-negative random variables $(A,B,C)$ satisfy
	\begin{equation}
		\label{eq:dice_upper_bound}
		\P{A < \frac{B+C}{2}} \le \frac{2}{3},
	\end{equation}
	and this inequality is strict 
        if $A$ has at least one atom,
        i.e., $\P{A=a}>0$ for some $a\in\R_+$.
\end{theorem}

\begin{proof}
        If $A=2^J$, $B=2^K$, and $C=2^L$ with $J,K,L$ independent and uniformly distributed in $\{0,\ldots,m-1\}$,
        then $A < \frac{B+C}{2}$ is equivalent to $J < \max(K,L)$, so that
        $$
        \P{A < \frac{B+C}{2}}
        = \sum_{j=1}^m \P{J=j} \, \P{\max(K,L) > j}
        = \sum_{j=1}^m \frac{1}{m} \left(1 - \left(\frac{j}{m}\right)^2\right).
        $$
        The identity $\sum_{j=1}^m j^2 = m(m+1)(2m+1)/6$ gives the equality
	in \eqref{eq:dice_lower_bound}.\footnote{We thank an anonymous referee for pointing out the precise value~\eqref{eq:dice_lower_bound}, which improves over our earlier lower bound.}
	This recovers for $m=2$
        the probability $3/8$ when $p=1/2$ in \eqref{three_by_eight},
        for $m=3$ the probability $13/27$ in 
        \eqref{thirteen_by_twenty-seven},
        and evaluates for $m=4$ to the probability $17/32$ in \eqref{seventeen_by_thirty-two} below.

        Inequality
        \eqref{probabilistic_method_indicators} holds for iid $(A,B,C)$,
        which proves \eqref{eq:dice_upper_bound}.

	It remains to be shown that the upper bound is not achieved when there is an atom.
	To this end, 
    	consider the three events corresponding to the number of distinct values,
    	\[
		E_1 \coloneqq \{|\{A,B,C\}|=1\},
                \quad
    		E_2 = \{ |\{A,B,C\}|=2 \},
                \quad
    		E_3 = \{ |\{A,B,C\}|=3 \}.
	\]
    The inequality $A<\frac{B+C}{2}$ obviously does not hold on $E_1$.
    Conditionally on $E_3$, the values $(A,B,C)$ are distributed
    as three cards drawn without replacement, and hence
    $\P{A < \tfrac{B+C}{2}\mid E_3} \le \frac23$
    by \Cref{prop:card_minimal}.
    Conditionally on $E_2$, we have
    $\P{A < \tfrac{B+C}{2}\mid E_2}
    = \frac12$. This implies that
    \begin{align*}
    \P{A < \tfrac{B+C}{2}}
    \le \frac{2}{3} \P{E_3} + \frac{1}{2} \P{E_2} 
    = \frac{2}{3} - \frac{1}{6} \P{E_2} - \frac{2}{3}\P{E_1}.
    \end{align*}
    Finally, 
    $\P{E_1} \ge \P{A=a}^3$ is strictly positive
    by $\P{A=a}>0$ if there is an atom $a$.
\end{proof}

So by the first statement of \cref{theorem:dice_optimal}, the casino can indeed rig the dice against you! 
The bound~\eqref{eq:dice_lower_bound} shows that this is possible with dice having $m \ge 4$ sides.
With a die having sides with values $1,2,4,8$, the winning probability for the casino is already
\begin{equation}
	\label{seventeen_by_thirty-two}
	\frac{17}{32} \approx 0.531.
\end{equation}
As the number of sides increases, the winning probability for the casino approaches $\frac{2}{3}$.
So you should be wary of playing, or at least demand a higher payout for when you win!

Indeed the card version of the game is fair
if the casino uses the cards \eqref{values_cards} and
offers three times your bet if you win, since in this case
the probability $\P{A<\frac{B+C}{2}}$ is exactly $\frac23$.
By \cref{theorem:dice_optimal}, such a fair game
with winning probability $\frac23$
cannot be achieved with dice with discrete outcomes.
It is still unclear if this can be achieved with dice with continuous outcomes, or whether the optimal probability of $\frac23$ cannot be achieved exactly.
We will return to the question of attainability
of the supremum in \Cref{section_open_problems}.

\begin{remark}
	Instead of~\eqref{eq:dice_lower_bound}, the following proof for the weaker bound
	\begin{equation}
		\label{eq:dice_lower_bound_weaker}
		\P{A < \frac{B+C}{2}} \ge \frac{2}{3} - \frac{3}{m}
	\end{equation}
	offers some additional insight into the relation between the dice and card versions of the game.

        Given a deck of $m$ cards with distinct values $1, 2, 4, \dots, 2^{m-1}$, we can consider a die with $m$ sides having the same values on the faces as on the cards, and such that all sides have equal probability $\frac{1}{m}$.
	Then if we exclude all rounds in which at least two of the dice land on the same side, the losing probability of the dice version trivially coincides with the losing probability of the card version.

	More formally, this amounts to considering the following joint distribution. 
        Throwing the die countably many times results in a sequence of outcomes $(X_t)_{t\ge 1}$, which are independent and uniformly distributed in $\{1,2,4,\dots,2^{m-1}\}$.
	Then let
	\[
		S \coloneqq \min\{s \ge 1 : X_s \neq X_1\}, 
		\qquad
		T \coloneqq \min\{t \ge 1 : X_t \neq X_1, X_S\}
	\]
	be the first times at which distinct outcomes are obtained.
	We take
	\begin{alignat*}{5}
		A & \coloneqq X_1, \qquad & B & \coloneqq X_2, \qquad & C & \coloneqq X_3, \\
		A' & \coloneqq X_1, & B' & \coloneqq X_S, & C' & \coloneqq X_T.
	\end{alignat*}
        Since $(A',B',C')$ has the same distribution as three cards
        drawn without replacement,
        $$
        \P{(A,B,C)=(A',B',C')}
        \ge \P{S=2, T=3} = \frac{(m-1)(m-2)}{m^2}
        \ge 1 - \frac{3}{m}.
        $$
        Since the cards $(A',B',C')$ drawn without replacement
        from the values in \eqref{values_cards}
        have the optimal probability
        $\P{A' < \tfrac{B' + C'}{2}} = \frac{2}{3}$,
	we obtain
        \[
		\P{A < \tfrac{B+C}{2}} \ge \P{A' < \tfrac{B' + C'}{2}} - \P{(A,B,C) \neq (A',B',C')}
		\ge \frac{2}{3} - \frac{3}{m},
	\]
	as was to be shown for the inequality in \eqref{eq:dice_lower_bound_weaker}.
\end{remark}

After enough customers have understood that the game is rigged against them by the casino having chosen the dice cleverly, 
the casino decides to change the game: Allow customers to
bring their own die and offer them a payout if
$A < \frac{B+C}{2}$ (instead of $A \ge \frac{B+C}{2}$ in previous games).

\begin{game}[Bring Your Own Die]
    The casino invites you to bring one die with outcomes in $\N$
    and arbitrary outcome probabilities.
    After you pay \$2 to play,
    the croupier rolls the die three times
    to get $A,B,C$.
    You win \$3 if
    $A < \frac{B+C}{2}$
    and lose otherwise.
\end{game}

Despite the apparent agency for customers in bringing their own die, and the possibility to get arbitrarily close to a fair game by choosing a suitable die, on average
the casino always has a strict advantage
by the non-achievability part of \Cref{theorem:dice_optimal}.
The player cannot argue that the 
payout should be strictly larger than $\$3$ either, since
any strictly larger payout would be unfair to, and could objectively bankrupt,
the casino.

\section{Optimizing over iid distributions in general}

As we have just seen, the casino manager's problem is an interesting and nontrivial mathematical question.
Let us now describe the general form of this kind of question and try to answer it.
Most of the developments in this section will be more general and more formal variations of the ideas we have already seen in the previous section.
As  will show, determining the optimal probability is much harder in general; the casino manager's problem has been a particularly simple case.

\begin{prob}
	\label{prob:optimizing_iid}
	Let $X$ be a measurable space and $f : X^n \to \R$ a bounded measurable function.
	Then find an algorithm to determine the quantity
	\begin{equation} 
		\label{eq:sup_mu}
		\sup_\mu \, \E[\mu^{\otimes n}]{f}.
	\end{equation}
	where $\mu$ runs over all probability measures on $X$ and $\mu^{\otimes n}$ denotes the $n$-fold product measure on $X^n$.
\end{prob}

This specializes to the casino manager's problem upon taking $X = \N$ and considering the indicator function of the event
\[
	E = \{(x_1, x_2, x_3) \in \N^3 \mid x_1 < \tfrac{x_2 + x_3}{2}\}.
\]
In order to generalize our solution of this problem into a general method, it will be useful to have a simple notation for averaging over a finite set.

\begin{nota}
	For a finite set $A$ and $f : A \to \R$, we write
	\[
		\int_{x \in A} f(x) \coloneqq \frac{1}{|A|} \sum_{x \in A} f(x).
	\]
\end{nota}

That is, when no measure is specified for an integral, then we leave it understood that we are averaging uniformly over a finite set.
This also applies to averaging over maps: with $B^A$ denoting the set of maps $A \to B$ for finite sets $A$ and $B$, for any $\Phi : B^A \to \R$ we write
\[
	\int_{g : A \to B} \Phi(g) \coloneqq \frac{1}{|B|^{|A|}} \sum_{g : A \to B} \Phi(g).
\]
If $|A| \le |B|$, then it is also meaningful to average over all injections $i : A \hookrightarrow B$, which we denote by
\[
	\int_{i : A \hookrightarrow B} \Phi(i) \coloneqq \frac{1}{|B| (|B| - 1) \cdots (|B| - |A| + 1)} \sum_{i : A \hookrightarrow B} \Phi(i).
\]
Given a third set $C$ with $|B| \le |C|$, we can average over injections $A \hookrightarrow C$ by averaging over injections $A \hookrightarrow B$ and injections $B \hookrightarrow C$ separately. In other words, 
\begin{equation}
	\label{inj_avg_comp}
	\int_{i : A \hookrightarrow C} \Phi(i) \,=\, \int_{j : A \hookrightarrow B} \int_{k : B \hookrightarrow C} \Phi(k \circ j).
\end{equation}
This equation will come in handy in our proofs.

Let us start the general development by deriving an upper bound on~\eqref{eq:sup_mu}, the quantity we are interested in.
As far as we know, this result was first stated (for Polish spaces) in a 2019 MathOverflow answer by Daniel Lacker,\footnote{See \href{https://mathoverflow.net/questions/323302/maximizing-the-expectation-of-a-polynomial-function-of-iid-random-variables/323842\#323842}{mathoverflow.net/questions/323302/maximizing-the-expectation-of-a-polynomial-function-of-iid-random-variables/323842\#323842}.} before we rediscovered it independently.

\begin{lemma}
	\label{sup_mu_ineq}
	For every $m \ge n$, we have
	\begin{equation}
		\label{eq:sup_mu_ineq}
		\sup_\mu \E[\mu^{\otimes n}]{f} \le \sup_{x_1, \dots, x_m \in X} \int_{i : [n] \hookrightarrow [m]} f(x_{i(1)}, \dots, x_{i(n)}).
	\end{equation}

\end{lemma}

\begin{proof}
	For fixed $\mu$ and every $i : [n] \hookrightarrow [m]$, we have by exchangeability
	\begin{align*}
		\E[\mu^{\otimes n}]{f} & = \int_{x_1, \ldots, x_n \in X} f(x_1, \ldots, x_n) \, \mu^{\otimes n}(\mathrm{d}x) \\
			& = \int_{x_1, \ldots, x_m \in X} f(x_{i(1)}, \ldots, x_{i(n)}) \, \mu^{\otimes m}(\mathrm{d}x).
	\end{align*}
	This equality still holds if $i$ itself is chosen uniformly at random from all injections $[n] \hookrightarrow [m]$.
	This gives
	\begin{equation}
		\label{eq:inj_avg}
		\E[\mu^{\otimes n}]{f} = \int_{x_1, \ldots, x_m \in X} \int_{i : [n] \hookrightarrow [m]} f(x_{i(1)}, \ldots, x_{i(n)}) \, \mu^{\otimes m}(\mathrm{d}x).
	\end{equation}
	The claim now follows from the fact that an integral with respect to a probability measure is bounded by the supremum of the integrand.
\end{proof}

As we show next, this upper bound can only get better as the free parameter $m$ increases, and in fact it converges to the exact value we are interested in.

\begin{theorem}
	\label{sup_mu}
	Let $X$ be a measurable space and $f : X^n \to \R$ bounded measurable.
	Then
	\[
		\sup_\mu \E[\mu^{\otimes n}]{f} = \lim_{m \to \infty} \sup_{x_1, \dots, x_m \in X} \int_{i : [n] \hookrightarrow [m]} f(x_{i(1)}, \dots, x_{i(n)}),
	\]
	where the limit is over a monotonically nonincreasing sequence.
\end{theorem}

So intuitively, maximizing an expectation value over all iid distributions is equivalent to maximizing it over all distributions that correspond to sampling without replacement from an $m$-element set and taking $m \to \infty$.

The following proof is essentially a simplified presentation of Lacker's.
The second part, which constructs a sequence of distributions $\mu$ converging to the supremum, was suggested to us by Will Sawin.\footnote{See the discussion at \href{https://mathoverflow.net/questions/474916/how-large-can-mathbfpx-1-x-2-x-3-2-x-4-get/\#comment1234501_475013}{mathoverflow.net/questions/474916/how-large-can-mathbfpx-1-x-2-x-3-2-x-4-get/\#comment1234501\_475013}.} 
It can be understood as an application of Freedman's bound on the total variation distance between sampling with and without replacement~\cite{freedman}.

\begin{proof}
	For the monotonicity in $m$, consider $m' \ge m$.
	Then using~\eqref{inj_avg_comp}, the averaging over injections $[n] \hookrightarrow [m']$ can be achieved by separately averaging over injections $[n] \hookrightarrow [m]$ and injections $[m] \hookrightarrow [m']$, which gives
	\begin{align*}
		\sup_{x_1, \dots, x_{m'} \in X} & \int_{i : [n] \hookrightarrow [m']} f(x_{i(1)}, \dots, x_{i(n)}) \\
		& = \sup_{x_1, \dots, x_{m'} \in X} \int_{j : [n] \hookrightarrow [m]} \int_{k : [m] \hookrightarrow [m']} f(x_{k(j(1))}, \dots, x_{k(j(n))}) \\
		& \le \sup_{x_1, \dots, x_{m'} \in X} \sup_{k : [m] \hookrightarrow [m']} \int_{j : [n] \hookrightarrow [m]} f(x_{k(j(1))}, \dots, x_{k(j(n))}) \\
		& = \sup_{x_1, \dots, x_m \in X} \int_{j : [n] \hookrightarrow [m]} f(x_{j(1)}, \dots, x_{j(n)}),
	\end{align*}
	where the inequality step is once again thanks to the fact that an expectation is bounded by the supremum,
	and the final step holds for trivial reasons, as both sides conduct in effect the same optimization.

	For the convergence, let $x_1, \dots, x_m \in X$ be given and consider the uniform distribution over these values,
	\begin{equation}
		\label{eq:mu}
		\mu \coloneqq \sum_{i=1}^m \frac{1}{m} \delta_{x_i}.
	\end{equation}
	The probability that a uniformly random map $k : [n] \to [m]$ is injective is
	\begin{equation}
		\label{eq:distinct_prob}
		\frac{m-1}{m} \cdot \ldots \cdot \frac{m-n+1}{m} \ge \left(1 - \frac{n}{m}\right)^n.
	\end{equation}
	Therefore, assuming $f \ge 0$ without loss of generality, we can bound
	\[
		\E[\mu^{\otimes n}]{f} = \int_{k : [n] \to [m]} f(x_{k(1)}, \dots, x_{k(n)}) 
		\ge \left(1 - \frac{n}{m}\right)^n \int_{i : [n] \hookrightarrow [m]} f(x_{i(1)}, \dots, x_{i(n)}).
	\]
	Hence the upper bound~\eqref{eq:sup_mu_ineq} coincides with the actual value up to a factor of $\left( 1 - \frac{n}{m} \right)^n$, which converges to $1$ as $m \to \infty$.
\end{proof}

\begin{remark}
	\label{lower_bounds}
	As the proof shows, the sequence of upper bounds does not only converge to the actual value, but we also get a bound on how far we are from the actual value, namely for $f \ge 0$ we are off by at most a factor of
	\[
		\frac{m-1}{m} \cdot \ldots \cdot \frac{m-n+1}{m} \ge \left(1 - \frac{n}{m}\right)^n.
	\]
	Also for every $m$, by~\eqref{eq:mu} there is a measure supported on $m$ points that achieves the supremum up to such a factor.
\end{remark}

The following application is due to Alon and Yuster, and our proof is essentially theirs formulated in our language.

\begin{ex}[{123 Theorem~\cite{AY}}]
	We prove that whenever $X_1$ and $X_2$ are real iid random variables, then
	\begin{equation}
		\label{eq:iid_diff_bound}
		\P{|X_1 - X_2| \le 2} \le 3 \, \P{|X_1 - X_2| \le 1}.
	\end{equation}
	In fact, Alon and Yuster prove this with strict inequality, and in a more general form where the bounds on the distance are arbitrary~\cite[Theorem~1.2]{AY}. 
	We focus on this special case for simplicity.

	To this end, we apply \Cref{sup_mu} with $n = 2$ and the function 
	\begin{align*}
		f(x_1, x_2) & \coloneqq I\{|x_1 - x_2| \le 2\} - 3 \, I\{|x_1 - x_2| \le 1\} \\[2pt]
			    & \phantom{:}= I\{|x_1 - x_2| \in (1,2]\} - 2 \, I\{|x_1 - x_2| \le 1\},
	\end{align*}
	and we show that
	\begin{equation}
		\label{eq:iid_diff_bound_2}
		\sup_{x_1, \dots, x_m \in \R} \sum_{1 \le i < j \le m} f(x_i, x_j) \le 2m - 2.
	\end{equation}
	This implies the claim upon normalizing by $\binom{m}{2}$ and taking $m \to \infty$, since then the right-hand side tends to zero.

	We thus need to prove that for every tuple of real numbers $(x_1, \dots, x_m)$, the number of pairs $i < j$ with $|x_i - x_j| \in (1,2]$ minus twice the number of pairs with $|x_i - x_j| \le 1$ is at most $2m - 2$.
	This is easy to see by induction on $m$, where the base case $m = 1$ is trivial.
	If we let $j$ be an index such that $x_j$ has the most other numbers within distance $\le 1$, then we can apply the induction hypothesis to the shorter tuple with $x_j$ removed.
	To see that adding $x_j$ can increase the total count by at most $2$, we need to show that there are at most twice as many other numbers at a distance within $(1, 2]$ from $x_j$ than there are at a distance $\le 1$ except for two.
	And indeed we have
	\[
		| \{ i \neq j \mid 1 < x_i - x_j \le 2 \} | \: \le \: | \{ i \neq j \mid -1 \le x_i - x_j \le 1 \} | + 1,
	\]
	since otherwise we would have a number in $(x_j + 1, x_j + 2]$ which has more other numbers within distance $\le 1$ than $x_j$ does.
	Using the same argument for $-2 \le x_i - x_j < -1$ completes the induction step and therefore the proof of~\eqref{eq:iid_diff_bound}.

	We can now consider sequences for which the left-hand side of~\eqref{eq:iid_diff_bound_2} is large and take the uniform distribution over them in order to get distributions for which~\eqref{eq:iid_diff_bound} is tight, as we did in the proof of \Cref{sup_mu}.
	While we do not know whether the bound of $2m - 2$ can be achieved, the tuple defined by $x_i \coloneqq c i$
	for any $c \in (1, 2]$ achieves $m - 1$.
	This amounts to the same in the limit as $m \to \infty$ since both expressions are $o(m^2)$.
	Therefore taking
	\[
		\mu \coloneqq \frac{1}{m} \sum_{i=1}^m \delta_{c i}
	\]
	produces a sequence of distributions for which~\eqref{eq:iid_diff_bound} is tight, in the sense that no positive constant can be subtracted from the right-hand side without violating the inequality.
	Calculating
	\[
		\P[\mu^{\otimes 2}]{|X_1 - X_2| \le 1} = \P[\mu^{\otimes 2}]{X_1 = X_2} = \frac{1}{m}, \qquad
		\P[\mu^{\otimes 2}]{|X_1 - X_2| \le 2} = \frac{3m - 2}{m^2},
	\]
	shows that the inequality is tight also in a different sense, namely in that the factor of $3$ in~\eqref{eq:iid_diff_bound} cannot be improved.
\end{ex}

\section{Maximizing the probability of a strict inequality}

Let us consider more concretely the problem of maximizing the probability of a strict inequality
\begin{equation}
	\label{eq:strict_ineq}
	\sum_{i=1}^n c_i X_i > 0
\end{equation}
to hold, where the coefficients $c_i \in \R$ are fixed and the $X_i$ are iid real-valued random variables.
That is, we try to determine
\begin{equation}
	\label{q:sup_mu_ineq}
	\sup_{\mu} \, \P[\mu^{\otimes n}]{\sum_{i=1}^n c_i X_i > 0} = \; ?
\end{equation}
where $\mu$ ranges over all probability measures on $\R$.
As already noted in \Cref{beat_the_average}, it is essential to use strict inequality, since with non-strict inequality we can trivially achieve probability $1$ by taking $X_1 = \ldots = X_n = 0$ deterministically.
In addition, the problem is interesting only if $\sum_{i=1}^n c_i = 0$, since otherwise~\eqref{eq:strict_ineq} also has a deterministic solution given by $X_1 = \ldots = X_n = 1$ if $\sum_{i=1}^n c_i > 0$ and by $X_1 = \ldots = X_n = -1$ if $\sum_{i=1}^n c_i < 0$.

As a variation on this theme, we can also consider the case where the $X_i$ are nonnegative, or equivalently where $\mu$ is supported on $\R_+$.
This is the setting of the Beat the Average game, and the general problem that we will focus on in the following.
As per the above, this problem is nontrivial as soon as $\sum_{i=1}^n c_i \le 0$, which we assume to be the case.
For example, the casino manager's problem is concerned with the inequality
\[
	X_1 + X_2 - 2 X_3 > 0.
\]
In this case, we had found that the sequence of bounds of \Cref{sup_mu} is constant in $m$, and therefore already tight at $m = n = 3$.
In \Cref{my_example} below, which is motivated by the law of large numbers, we will see that this is not the case in general.

\begin{remark}
	\label{rem:tension}
	There is a peculiar tension which makes problems of the form~\eqref{q:sup_mu_ineq} seem particularly interesting: on the one hand, a distribution that is close to optimal cannot be supported away from $0$, since otherwise one could improve the probability of~\eqref{eq:strict_ineq} by shifting the distribution towards the left; on the one hand, it cannot have too much weight on $0$, since the probability of $X_1 = \ldots = X_n = 0$ cannot be too high.
\end{remark}

\begin{ex}
	\label{my_example}
	The solution to the casino manager's problem, which we considered in \Cref{beat_the_average}, is
	\begin{equation}
		\label{eq:casino_solution}
		\sup_\mu \, \P[\mu^{\otimes 3}]{2 X_1 < X_2 + X_3} = \frac{2}{3},
	\end{equation}
	where $\mu$ ranges over all probability measures on $\N$ or $\R_+$.\footnote{Note that our derivation from \Cref{beat_the_average} applies either way.}

	This problem can be seen as an instance of the following: 
        determine the value of
	\begin{equation}
		\label{eq:gen_example}
		\sup_\mu \, \P[\mu^{\otimes n}]{\frac{1}{n} \sum_{i=1}^n X_i > \frac{\alpha}{m} \sum_{i=1}^m X_i},
	\end{equation}
        for given $n > m$ and $\alpha > 0$, 
	where $\mu$ ranges over all probability measures on $\R_+$?
	This question is motivated by the law of large numbers: how strongly can the sample average grow with the number of samples?
	Surprisingly, there are nontrivial universal bounds on these quantities, even without assuming that $\mu$ has moments of any order.
	Our~\eqref{eq:casino_solution} is exactly the $n = 3$, $m = 1$ and $\alpha = 1$ case of this.
\end{ex}

In order to address~\eqref{q:sup_mu_ineq} in general, and with the support restricted to $\R_+$, we can use \Cref{sup_mu_ineq} to derive upper bounds.
For fixed $m \ge n$ and a given injection $\alpha : [n] \hookrightarrow [m]$, let us call
\[
	\sum_{i=1}^n c_i x_{\alpha(i)} > 0
\]
a \newterm{version} of the inequality under consideration.
Thus the total number of versions of the inequality is equal to the falling factorial $m (m-1) \cdots (m-n+1)$.\footnote{If some of the coefficients $c_i$ coincide, then there are additional symmetries which effectively reduce the number of versions, which happens for example for the inequalities of \Cref{my_example}.}
Then \Cref{sup_mu_ineq} states that~\eqref{eq:sup_mu_ineq} is upper bounded by the largest number of versions that are jointly satisfied for any deterministic assignment $x_1, \ldots, x_m \in \R_+$ divided by the total number of versions.
Calculating this upper bound is an instance of the \newterm{maximum feasible subsystem} problem with strict linear inequalities, which is NP-hard in general~\cite[Theorem~4]{AK}.\footnote{It is conceivable that the instances of the maximum feasible subsystem problem which arise in our context are easier. The fact that all constraints coincide up to permutations of the variables equips these instances with additional structure that can possibly be exploited.}

As for lower bounds, \Cref{lower_bounds} lets us turn these upper bounds into concrete $\mu$'s which provide lower bounds.
But in the present setting of linear inequalities, the following 
proposition provides better lower bounds.

\begin{prop}
	\label{strict_ineq_geom}
	Let $\nu$ be a probability measure on $\R_+$
        with $\P[\nu^{\otimes n}]{\sum_{i=1}^n c_i X_i > 0} > 0$.
	Then for every $\eps > 0$, 
	there is a probability measure $\mu$ on $[0,1]$ such that
	\begin{equation}
		\P[\mu^{\otimes n}]{\sum_{i=1}^n c_i X_i > 0} \ge \frac{\P[\nu^{\otimes n}]{\sum_{i=1}^n c_i X_i > 0}}{1 - \P[\nu^{\otimes n}]{\sum_{i=1}^n c_i X_i = 0}} - \eps.
                \label{conclusion_strict}
	\end{equation}
        If furthermore $\nu$ is discrete with finitely many atoms, then there 
        exists a measure $\mu$ satisfying \eqref{conclusion_strict} with $\eps=0$.
\end{prop}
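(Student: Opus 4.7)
The plan is to implement rigorously the iterated-perturbation construction \eqref{eq:iterated_perturb}, augmented with truncation to bounded support and a rescaling to $[0,1]$. First observe that we may assume $p := \P[\mu^{\otimes n}]{\sum_i c_i X_i > 0}>0$ and $q := \P[\mu^{\otimes n}]{\sum_i c_i X_i = 0}<1$, since otherwise the right-hand side of \eqref{conclusion_strict} is nonpositive and any $\nu$ on $[0,1]$ works.

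For the first claim, I would truncate $\mu$ to bounded support: for $M>0$ with $\mu([0,M])>0$, let $\mu_M$ denote $\mu$ conditioned on $[0,M]$ and set $p_M, q_M$ accordingly. Since $\mu^{\otimes n}$ conditioned on $\{X_i\le M\text{ for all }i\}$ is $\mu_M^{\otimes n}$, we have $\mu([0,M])^n p_M \le p \le \mu([0,M])^n p_M + (1-\mu([0,M])^n)$ and similarly for $q$, so $p_M\to p$ and $q_M\to q$ as $M\to\infty$, hence $p_M/(1-q_M)\to p/(1-q)$. Pick $M$ so that $p_M/(1-q_M)\ge p/(1-q)-\eps/3$. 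Form the iid variables $S_j:=\sum_i c_i X_{i,j}$ with $X_{i,j}\sim\mu_M$: these are uniformly bounded by $K:=M\sum_i|c_i|$ and have $\P{S_j>0}=p_M$, $\P{S_j=0}=q_M$. If every $S_j$ (for $j\le k$) either equals zero or satisfies $|S_j|>\delta$, and if $\eta K/(1-\eta)<\delta$, then the sign of $\sum_{j=1}^k\eta^{j-1}S_j$ equals the sign of the first nonzero $S_j$, because that first nonzero term dominates the geometric tail. Since $\P{0<|S_j|\le\delta}\downarrow 0$ as $\delta\downarrow 0$, taking $\delta$ small and then $\eta<\delta/(\delta+K)$ yields
\[
    \P{\sum_i c_i \bar X_i^{(k)} > 0} \ge \frac{p_M(1-q_M^k)}{1-q_M} - \frac{\eps}{3}.
\]
Choose $k$ so that $p_M(1-q_M^k)/(1-q_M)\ge p_M/(1-q_M)-\eps/3$. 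Finally, $\bar X_i^{(k)}$ has support in $[0,M/(1-\eta)]$; rescaling by the factor $(1-\eta)/M$ produces a measure $\nu$ on $[0,1]$, and the probability in \eqref{conclusion_strict} is invariant under positive rescaling.

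For the second claim, assume $\mu$ has finitely many atoms. Then the $S_j$ take finitely many values, so $s_0:=\min\{|s|:\P{S_j=s}>0,\ s\ne 0\}>0$. Choose any $\eta\in(0,s_0/(s_0+K))$; then $\eta K/(1-\eta)<s_0$, and the ``first nonzero $S_j$'' argument becomes exact. Let $\bar X_i^{(\infty)}:=\sum_{j=1}^\infty\eta^{j-1}X_{i,j}$, which converges almost surely since its terms are bounded and decay geometrically. On the a.s.\ event that some $S_j\ne 0$, the sign of $\sum_{j\ge 1}\eta^{j-1}S_j$ is exactly the sign of the first such $S_j$, so
\[
    \P{\sum_i c_i \bar X_i^{(\infty)} > 0} = \sum_{j\ge 1} q^{j-1} p = \frac{p}{1-q}
\]
exactly, and rescaling the law of $\bar X_i^{(\infty)}$ into $[0,1]$ produces the desired $\nu$ with $\eps=0$.

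The main obstacle I anticipate is the quantitative ``first nonzero $S_j$'' step in the general case: the $S_j$ may place small positive mass arbitrarily close to zero, forcing us to choose $\delta$ (the gap) and $\eta$ (the perturbation strength) in the right order to control the error. For finitely supported $\mu$, the positive gap $s_0$ makes this step trivial and, crucially, allows the $k\to\infty$ and $\eta\to 0$ limits to be bypassed entirely, yielding the \emph{exact} target probability $p/(1-q)$.
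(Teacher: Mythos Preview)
Your proof is correct and follows essentially the same iterated-perturbation scheme as the paper, with your ``sign determined by the first nonzero $S_j$'' argument matching the paper's own explanation in the remark following the proposition. Your explicit truncation to $[0,M]$ and rescaling step are helpful additions that the paper glosses over but are indeed needed to place $\nu$ on $[0,1]$ when $\mu$ has unbounded support.
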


\begin{proof}
    Assume first that $\nu$ is supported
    in $[0,A]$ for some $A>0$.
    Let $\eta\in(0,1)$
    and define
    $Y_i \coloneqq \sum_{j=1}^\infty \eta^{j-1} X_{i,j}$
    where $(X_{i,j})_{i\in[n], j\in\N}$ are iid with distribution $\nu$ and the assumed boundedness ensures that the series converges almost surely.\footnote{We owe this concise formulation to Will Sawin. See the comment \href{https://mathoverflow.net/questions/474916/how-large-can-mathbfpx-1-x-2-x-3-2-x-4-get/475013\#comment1234202\_474927}{mathoverflow.net/questions/474916/how-large-can-mathbfpx-1-x-2-x-3-2-x-4-get/475013\#comment1234202\_474927}.}
    Similarly, set
    $Y_i' \coloneqq \sum_{j=2}^\infty \eta^{j-2} X_{i,j}$
    so that $Y_i = X_{i,1} + \eta Y_i'$
    and $Y_i'$ is equal in distribution to $Y_i$.
    Define
    $S = \sum_{i=1}^n c_i Y_i$
    and $S' = \sum_{i=1}^n c_i Y_i'$,
    so that $S= \sum_{i=1}^n c_i X_{i,1} + \eta S'$.
    Then with $I\{\cdot\}$ denoting the indicator function of an event, we have almost surely
    $$
    I\Bigl\{ S > 0 \Bigr\}
    -
    I\Bigl\{ \sum_{i=1}^n c_i X_{i,1} = 0
\Bigr\}
I\Bigl\{ S' > 0 \Bigr\}
    \ge 
    I\Bigl\{ \sum_{i=1}^n c_i X_{i,1} >
        \max\Bigl(0,
    - \eta S'
        \Bigr)\Bigr\}.
    $$
    Taking expectations, the left-hand side
    equals $\P{S>0}(1 - \P{\sum_{i=1}^n c_i X_{i,1} = 0})$
    since $S$ and $S'$ are equal in distribution 
    and by independence for the negative term.
    If $\nu$ is discrete with finitely many atoms
    then so does the law of $\sum_{i=1}^n c_i X_{i,1}$,
    and one can find $\eta>0$
    small enough so that the expectation of the right-hand side
    is exactly $\P{\sum_{i=1}^n c_i X_{i,1} > 0}$.
    For general $\nu$, the limit as $\eta\to 0$ of the expectation of the
    right-hand side is $\P{\sum_{i=1}^n c_i X_{i,1} > 0}$, for instance by the
    monotone convergence theorem.
    With $\mu$ the law of $Y_i$, this completes
    the proof if $\nu$ is supported in $[0,A]$.

    In general,
    let $A>0$ be large enough so that
    $\P{\exists i\in[n], \min(A, X_i) \ne X_i}\le \eps$
    for iid $X_i\sim \nu$.
    Applying the previous argument to the distribution $\nu_A$
    of $\min(A,X_i)$, 
    there exists $\mu$ such that \eqref{conclusion_strict}
    holds with $\nu_A$ in place of $\nu$.
    Since the numerator (resp.~denominator) for $\nu$ and $\nu_A$
    are at most $\eps$ apart, the conclusion follows modulo a reparametrization of $\eps$.
\end{proof}

	\section{The case of 
        	\texorpdfstring{$\sup_{\mu} \, \P[\mu^{\otimes 4}]{X_1 + X_2 + X_3 < 2 X_4}$}{sup P[X1+X2+X3 < 2 X4]}
        }
	\label{example}

	While determining~\eqref{eq:gen_example} in general seems to be challenging open problem, let us consider one further instance of it.
        For $n = 3$, $m = 4$ and $\alpha = 9/8$, the problem \eqref{eq:gen_example} is equivalent to maximizing the probability of $X_1 + X_2 + X_3 < 2 X_4$ for nonnegative iid variables.
	We now use the results obtained so far  to derive lower and upper bounds for this quantity, namely
	\begin{equation}
            \boxed{
            0.400695 \,\le\, \sup_{\mu} \P[\mu^{\otimes 4}]{X_1 + X_2 + X_3 < 2 X_4} \,\le\,
	    \frac{673}{1615} \,\le\, 0.417 
        }
        \label{4_eq_1}
	\end{equation}
        where $\mu$ ranges over all probability measures on $\R_+$.
	The precise value remains unknown.

        \subsection{Lower bound}

	We now sketch the path that we took towards the lower bound in \eqref{4_eq_1} before presenting the statement and proof as \Cref{prop:lower_bound_04}.

        Proving a lower bound requires the construction
        of a measure $\mu$ with large probability for the event
        $X_1 + X_2 + X_3 < 2X_4$.
	Let us consider finitely supported $\mu$ for the moment, so that $\mu$ is defined by specifying the locations $a_i$ and weights $\mu(\{a_i\})$ of its atoms, and make the tension noted in \cref{rem:tension} more precise.
        It is quite clear that $0$ should be an atom, because otherwise, subtracting the smallest atom from $X_i$ would yield new variables 
        with a higher probability for the strict inequality
        in \eqref{4_eq_1}.
        Then, the presence of an atom at $0$ implies that the event
        $X_1 + X_2 + X_3 = 2X_4$ also has positive probability, namely
        \[
		\P[\mu^{\otimes 4}]{X_1 + X_2 + X_3 = 2X_4} \ge 3 \mu(\{0\}) \sum_{i \: : \: a_i \neq 0} \mu(\{a_i\})^3,
	\]
	since the desired equality holds as soon as one term on the left is zero and the three other variables coincide.
        The utility of \Cref{strict_ineq_geom} is now that it lets us reallocate
        some of the probability mass that is ``lost'' to $\P{X_1 + X_2 + X_3 = 2X_4}$ in the presence of an atom at $0$ to the desired event $X_1 + X_2 + X_3 < 2X_4$.
	We can in particular conclude that a finitely supported distribution cannot be optimal.

	For example, \Cref{strict_ineq_geom} applied to 
	$\nu=\mathrm{Bernoulli}(p)$ and $\eps=0$
        and optimizing over $p\in(0,1)$ yields that the 
        lower bound \eqref{conclusion_strict}, namely
	\begin{equation}
                \P[\mu^{\otimes 4}]{X_1+X_2+X_3  < 2X_4} \ge \frac{\P[\nu^{\otimes 4}]{X_1+X_2+X_3  < 2X_4}}{1 - \P[\nu^{\otimes 4}]{X_1+X_2+X_3  = 2X_4}}
                \label{conclusion_strict_no_eps}
	\end{equation}
        is at least $0.343$ at $p\approx 0.404$.
        A better lower bound can be obtained with three atoms for $\nu$.
        With further numerical experimentation,
	taking $
		\nu \coloneqq \frac{1}{2} \delta_0 + \frac{1}{6} \delta_5 + \frac{1}{3} \delta_9$ gives\footnote{For the second probability, note that $0 + 0 + 0 = 2 \cdot 0$ is the only solution to $X_1 + X_2 + X_3 = 2 X_4$ with $X_i \in \{0,5,9\}$. For the inequality, the solutions are the following, with braces denoting either possibility: \, $0 + 0 + 0 < 2 \cdot \{5, 9\}$; \, $0 + 0 + \{5, 9\} < 2 \cdot \{5, 9\}$; \, $0 + 5 + 5 < 2 \cdot 9$; \, $0 + 5 + 9 \le 2 \cdot 9$; \, $5 + 5 + 5 < 2 \cdot 9$; \, and their permutations.}
	\[
		\P[\mu^{\otimes 4}]{X_1 + X_2 + X_3 < 2 X_4} = \frac{26}{81}, \qquad \P[\mu^{\otimes 4}]{X_1 + X_2 + X_3 = 2 X_4} = \frac{1}{8}.
	\]
	Hence in combination with \Cref{strict_ineq_geom}, this distribution provides
        a lower bound \eqref{conclusion_strict_no_eps}
        equal to $\frac{208}{567}\approx 0.367$.

        Additional numerical experimentation
        involving slowly increasing the number of atoms,
        and for each new support optimizing the probability mass
        function using gradient ascent, led to 
        an improved lower bound of $0.391$.
        A pattern emerged from optimizing the weights of
        the probability mass functions over
        these atoms to maximize \eqref{conclusion_strict_no_eps}
        with gradient descent:
	good distributions were approximately of the form
	\[
		\nu = \frac{1}{2} \delta_0 + \frac{1-q}{2N} \sum_{i=1}^N \delta_{1-2^{-i}} + \frac{q}{2} \delta_{1-2^{-(N+1)}}
	\]
	with parameters $N \in \N$ and $q\in (0,1)$.
        The values $N=61$ and $q=0.0546388$ achieve
        $0.398$ in \eqref{conclusion_strict_no_eps}.
	In this way, we arrived at the following lower bound, where also the weight of the atom at $0$ is optimized over.

        \begin{prop}[Lower bound]
            \label{prop:lower_bound_04}
            For every $\eps > 0$ there exists a discrete measure $\mu$ supported on $(0,1)$ such that
            \begin{equation}
		    \label{eq:lower_bound}
		    \P[\mu^{\otimes 4}]{X_1 + X_2 + X_3 < 2 X_4} \ge \sup_{p \in (0,1)} \frac{p(2-p)}{1 + p + p^2 + p^3} - \eps.
	    \end{equation}
        \end{prop}

	By taking $p = 0.474346$, which is the approximate value of $p$ that achieves the supremum on the right, we obtain the numerical lower bound in \eqref{4_eq_1}.\footnote{Our original lower bound of $0.4$ in an earlier version of this paper only considered $p = \frac{1}{2}$ in~\eqref{eq:mu_p} for the weight of the atom at $0$, which gives $\frac{2}{5}-\eps$ in~\eqref{eq:lower_bound}. We thank Vincent Yu for pointing out that $p=\frac{1}{2}$ was not the optimal choice (see \href{https://mathoverflow.net/a/484853/27013}{mathoverflow.net/a/484853/27013}).}

        \begin{proof}
        We choose $N \in \N$, for which we will take $N \to \infty$ at the end.
	In terms of this, define the discrete measure $\nu$ as
	\begin{equation}
		\label{eq:mu_p}
		\nu \coloneqq p \delta_0 + \frac{1-p}{N} \sum_{i=1}^N \delta_{1-2^{-i}}.
	\end{equation}
        Then the probability of strict inequality is bounded from below
        as
	\[
        \P{X_1+X_2+X_3 < 2X_4}
        \ge 
        \begin{cases}
            & \!\!\! \P{X_1=0, X_2=0, X_3 = 0, X_4 > 0}
            \\
            + \, 3 & \!\!\! \P{X_1=0, X_2=0, X_3 > 0, X_4 > 0, ~~X_3 < 2X_4}
            \\
            + \, 3 & \!\!\! \P{X_1=0, X_2 >0, X_3 >0, X_4 >0,  ~~X_2+X_3 < 2X_4}
        \end{cases}
	\]
        since the events in the terms on the right-hand side are disjoint,
        each implies $X_1+X_2+X_3 < 2X_4$, and the factors of $3$ stem from the possibility to permute the variables.
        The event on the first line
        has probability $p^3(1-p)$.
        Furthermore since
        $2X_4\ge 1$ as soon as $X_4\ne 0$ and 
        $X_3 <1$ always holds, the second line equals
        \[
		3 \, \P{X_1=X_2=0, X_3 > 0, X_4>0} = 3 p^2 (1-p)^2.
	\]
        By the definition of conditional probability,
        the third line can be written as
	\[
        3 \, \P{X_1=0, X_2 >0, X_3 >0, X_4 >0}
         \: \P{X_2 + X_3 < 2X_4 \mid X_1 = 0, X_2 >0, X_3 >0, X_4 >0}.
	\]
        If $X_2,X_3, X_4$ are positive,
        then they all belong to $\{1-\frac{1}{2^i} \: : \: i=1,\dots,N\}$
        and the implication
        \begin{equation}
        X_3 < X_4
	\; \Rightarrow \;
        X_2 + X_3 < 2X_4
        \label{impl}
        \end{equation}
        holds thanks to the following argument.
	Since the values of all variables are of the form $1-\frac{1}{2^i}$, 
        it is easy to see that $X_3 < X_4$ implies\footnote{This is seen most easily by considering the $Y_i \coloneqq 1 - X_i$.}
	$1 + X_3 \le 2 X_4$. 
        Since $X_2 < 1$, the implication 
        \eqref{impl} follows.
        Similarly, by exchanging the roles of $X_2$ and $X_3$,
	the implication
        \begin{equation}
            \label{impl2}
            X_2 < X_4 
	    \; \Rightarrow \;
	    X_2+X_3<2X_4
        \end{equation}
        also holds.
        Denoting by $E \coloneqq \{X_1 = 0, X_2 >0, X_3 >0, X_4 >0\}$
	the conditioning event for brevity, we clearly have $\P{E} = p (1 - p)^3$.
	By the earlier considerations, we also have
        \begin{align*}
         &\P{X_2 + X_3 < 2X_4 \mid E}
         \\&\ge \P{
             \min(X_2,X_3) < X_4
            \mid E
	    } && \text{[by implications \eqref{impl}--\eqref{impl2}]} \\
         &\ge \P{
		 \min(X_2,X_3) < X_4 \text{ and }
             X_2,X_3,X_4\text{ distinct}
            \mid E
         }
	 && \text{[inclusion]}
       \\& = \tfrac{2}{3} \, \P{
             X_2,X_3,X_4\text{ distinct}
            \mid E
         }
	   && \text{[by symmetry]}
	 \\& = \tfrac{2}{3} \, N(N-1)(N-2)/N^3
	   && \text{[counting distinct triples]}
         .
        \end{align*}
        In summary, we have established the lower bound
        \begin{align*}
        \P{X_1+X_2+X_3 < 2X_4}
	&\ge p^3 (1 - p) + 3 p^2 (1 - p)^2 + 2 p (1 - p)^3 \frac{N(N-1)(N-2)}{N^3}
      	\\[4pt]
	& = p (1 - p) (2 - p) - O(N^{-1}).
        \end{align*}
        For the probability of equality appearing in the denominator
        of \eqref{conclusion_strict}, we simply use
        $$
        \P{X_1+X_2+X_3 = 2X_4}
	\: \ge \: \P{X_1=X_2=X_3=X_4=0} = p^4.
        $$
        Now \Cref{strict_ineq_geom} with $\eps=0$ yields the existence of a measure $\mu$ on $[0,1]$ such that
	\begin{align*}
        \P[\mu^{\otimes 4}]{X_1 + X_2 + X_3 < 2 X_4} 
	& \ge 
	\frac{p(1-p)(2-p)}{1 - p^4} - O(N^{-1}) \\
	& =
	\frac{p(2-p)}{1 + p + p^2 + p^3} - O(N^{-1}).
        \end{align*}
        Taking $N\to\infty$ completes the proof
        of the lower bound in \eqref{4_eq_1}.
        \end{proof}

	\begin{remark}
		Optimizing the rational function on the right-hand side of~\eqref{eq:lower_bound} over $p$ can be done analytically.
		Indeed the maximum is the unique solution in $(0,1)$ of the fourth order equation
		\begin{equation}
			\label{4th_order_equation}
			2 - 2p - 3 p^2 - 4 p^3 + p^4 = 0,
		\end{equation}
		which can be expressed analytically in terms of radicals.
		Therefore we would also obtain a lower bound in~\eqref{4_eq_1} that is exact and slightly improves on the numerical value of $0.400695$ by trailing digits.
		However, calculating these explicit expressions in \texttt{SageMath} shows that they are a bit too large to be displayed here, and they do not seem to provide any additional insight.
	\end{remark}

	\begin{remark}
            The proof of \Cref{prop:lower_bound_04} derives
            a lower bound for each value of $p$ and $N$, takes
            the limit as $N\to+\infty$ and then optimizes over $p$.
            A referee pointed out that one can also evaluate
            $\P[\nu^{\otimes n}]{X_1 + X_2 + X_3 < 2 X_4}/(1-p^4)$
            exactly for each value of $p$ and $N$ using symbolic 
            computational tools such as
	    \texttt{Mathematica} or \texttt{SymPy}, and optimize over $p$ for a fixed
            value of $N$. Performing this operation for $N=1,\dots,150$
            gives values closer and closer to $2/5$, but eventually
            $N=156$ is the first integer providing a probability
            strictly greater than $2/5$.
	    Thus this is an instructive example of how numerical 
	    experimentation can be misleading if not taken far enough.
        \end{remark}

	\subsection{Upper bound} 

	Before proving the upper bound given in \eqref{4_eq_1}, it is instructive to first derive weaker bounds.
	To this end, we apply \Cref{sup_mu_ineq} first with $m = 4$ and then with $m = 6$; this is interesting insofar as it provides an example where the upper bound improves with increasing $m$.
	The bound for $m = 6$, which will be $\frac{7}{15} \le 0.467$, is also the best upper bound that we know of which can be verified by hand.
	In the next subsection, we will then present the computer-assisted proof of the upper bound in \eqref{4_eq_1}.

	For $m = 4$, the four versions of the inequality are
	\[
		\begin{aligned}
			& X_1 + X_2 + X_3 < 2 X_4, 
			\qquad& X_1 + X_2 + X_4 < 2 X_3, \\
			& X_1 + X_3 + X_4 < 2 X_2, 
			\qquad& X_2 + X_3 + X_4 < 2 X_1.
		\end{aligned}
	\]
	To apply \Cref{sup_mu_ineq}, we need to determine how many of these are jointly satisfiable.
	Assuming $X_1 \le X_2 \le X_3 \le X_4$ without loss of generality, it is clear that at most the first two are jointly satisfiable, and this is possible with
	$X_1 = X_2 = 0$ and $X_3 = X_4 = 1$.
	Therefore by~\Cref{sup_mu_ineq}, we can conclude
	\begin{equation}
		\label{eq:m4}
		\sup_\mu \P[\mu^{\otimes 4}]{X_1 + X_2 + X_3 < 2 X_4} \le \frac{2}{4} = \frac{1}{2}.
	\end{equation}

	For general $m$, there are
	\begin{equation}
            \textstyle
		4 \cdot \binom{m}{4} = (m - 3) \cdot \binom{m}{3}
                \label{number_of_satisfiable_inequalities}
	\end{equation}
	different versions of the inequality 
	\begin{equation}
		\label{eq:ijkl}
		X_i + X_j + X_k < 2 X_\ell,
	\end{equation}
	corresponding to all possible ways of choosing indices $i < j < k$ and $\ell$ modulo permutations of the first three. 
	Indeed determining a version amounts to choosing a four-element subset of $[m]$
	and picking one element of it to be on the right-hand side,
	which gives the $4\binom{m}{4}$ 
        in \eqref{number_of_satisfiable_inequalities}.
        Assuming $X_1 \le X_2 \le \dots \le X_m$ without loss of generality,
        a version of the inequality is automatically violated if the right-hand side index $\ell$
        is the smallest or second smallest of the four selected, or equivalently if $\ell < j$.
        This partitions the set of all versions of the
        inequality into two disjoints subsets
	\begin{align}
		\label{eq:ijkl_subsets}
		\begin{split}
		\Bigl\{(i,j,k,\ell)\in & [m]^4 :
            i<j <k, \: i \neq \ell < j
        \Bigr\} \\
	& \cup
        \Bigl\{(i,j,k,\ell)\in [m]^4 :
            i<j <k, \: j < \ell \neq k
        \Bigr\},
		\end{split}
	\end{align}
	where only the quadruples of the second subset need to be considered further to determine a maximal feasible subsystem.
        Both subsets have cardinality $2\binom{m}{4}$.

        For $m = 6$, this leaves $30$ versions of the inequality~\eqref{eq:ijkl}.
        Our next task is to understand how many of these are jointly satisfiable.
        Among these $30$ are the six versions
	\begin{equation}
		\label{eq:six_ineqs}
		\begin{aligned}
			X_3 + X_4 + X_5 < 2 X_6, &&&&&& X_3 + X_4 + X_6 < 2 X_5, \\
			X_1 + X_2 + X_5 < 2 X_3, &&&&&& X_1 + X_2 + X_6 < 2 X_3, \\
			X_1 + X_3 + X_6 < 2 X_4, \\
			X_2 + X_4 + X_6 < 2 X_5.
		\end{aligned}
	\end{equation}
	The four versions on the left are already not jointly satisfiable, since adding them results in a contradiction by $X_1, X_2 \ge 0$.
	And likewise for the two inequalities on the right, where one now also uses $X_3 \le X_4$ and $X_5 \le X_6$ in addition.
	Therefore at least two versions of the $30$ must be violated, resulting in an upper bound of $28$ for a maximal feasible subsystem.
	And indeed one can check that the assignment
	\[
		X_1 = X_2 = 0, \qquad X_3 = 4, \qquad X_4 = 6, \qquad X_5 = X_6 = 7.
	\]
	satisfies all $30$ versions under consideration except for the bottom one on each side of~\eqref{eq:six_ineqs}.
	Therefore the maximal feasible subsystems consist of $28$ inequalities, and we can improve~\eqref{eq:m4} to
	\[
		\sup_\mu \P[\mu^{\otimes 4}]{X_1 + X_2 + X_3 < 2 X_4} \le \frac{28}{60} = \frac{7}{15}.
	\]
	Since this is smaller than the upper bound in~\eqref{eq:m4}, we conclude that the upper bounds of \Cref{sup_mu_ineq} do improve with increasing $m$ in general.

\subsection{
	Upper bound for larger $m$ by Mixed Integer Linear Programming
}

While we have found a maximal feasible subsystem for $m=6$ by hand, the combinatorial nature of the problem makes
it impractical to do so for larger $m$.
Since the maximum feasible subsystem problem (with strict linear inequalities) is NP-hard in general~\cite[Theorem~4]{AK}, we do not expect there to be a simple solution to the problem for large $m$, whether by hand or algorithmically.

Still, one may hope that this specific instance is endowed with
a specific structure that makes it possible to scale the strategy
\eqref{eq:six_ineqs} algorithmically to larger $m$.
As we will see now, maximum feasible subsystem problems can be cast as Mixed Integer Linear Programs (MILP) and solved with software such as Gurobi or CPLEX.
Our particular approach generalizes and scales the considerations we used above in the $m=4$ and $m=6$ cases to up to $m=20$ using two MILPs.

The first MILP that we employ attempts to find real numbers
$0\le x_1 \le x_1 \le \dots \le x_m\le 1$ satisfying as many of the constraints
\begin{equation}
x_i + x_j + x_k \le 2 x_\ell - \operatorname{threshold}
\label{ineq_threshold}
\end{equation}
as possible, where threshold is a small positive constant
used to force a strict inequality, since MILP constraints require
non-strict inequalities for numerical stability.
First, for a given integer $m\ge 6$, we consider the second set in \eqref{eq:ijkl_subsets},
\begin{equation}
\mathcal T \coloneqq
        \Bigl\{(i,j,k,\ell)\in [m]^4 :
            i<j <k, \: j < \ell \neq k
        \Bigr\}.
\label{def_mathcal_T}
\end{equation}
The set $\mathcal T$ contains all the versions of the inequalities
that cannot be discarded right away by the argument directly following
\eqref{eq:m4}.
Based on this, the first MILP
we consider is given by
\begin{align}
    \mbox{maximize}\quad& \sum_{t\in \mathcal T} y_t,
    \label{milp1}
    \\
    \mbox{with respect to}\quad& y_t\in\{0,1\} && \text{for each }t\in\mathcal T,
    \nonumber
    \\
                                  & x_i\in [0,1] && \text{for each }i\in [m],
    \nonumber
                                  \\
    \mbox{subject to}\quad
    \nonumber
                          & x_i + x_j + x_k + 
                          M(y_t-1)
                          \le 2 x_{\ell} - \operatorname{threshold}
                          &&\text{for each } t = (i,j,k,\ell) \in \mathcal T \\
			& 0 \le x_1 \le x_2 \le \dots \le x_m,
    \nonumber
\end{align}
where $M \coloneqq 3+\operatorname{threshold}$.
The term $M(y_t-1)$ in the first constraint
is the standard ``big-M method'' to encode the implication
\begin{equation}
    \label{ineq_implication_milp}
    y_t = 1
    \quad
    \Rightarrow
    \quad
    x_i + x_j + x_k + \le 2 x_{\ell} - \operatorname{threshold}
\end{equation}
as a linear inequality involving a binary variable $y_t\in\{0,1\}$.
Indeed, if $y_t=0$ the constraint is always satisfied for
any values $x_i,x_j,x_k,x_\ell\in[0,1]$ thanks to the $M(y_t-1)$ term,
while if $y_t=1$ the $M(y_t-1)$ term vanishes and
$x_i + x_j + x_k \le 2 x_{\ell} - \operatorname{threshold}$
must hold. This shows that \eqref{ineq_implication_milp}
is equivalent to the first constraint.\footnote{Actually, our implementation
using the Gurobi solver works with the implication~\eqref{ineq_implication_milp} directly as it supports logical constraints of the form \eqref{ineq_implication_milp} without the need to explicitly introduce the big-M term.}
Our implementation also sets $x_m = 1$ without loss of generality.

We have implemented this MILP in Python using the Gurobi solver, and our implementation \texttt{milp1.py} is available on the arXiv together with this paper.
This MILP terminates on a desktop computer
for up to $m=20$ with $\operatorname{threshold}=2\cdot 10^{-5}$.
The solution is given by\footnote{These values are rounded to $7$ decimal places from the output of the Gurobi solver. We have verified that the number of satisfied inequalities is the same as without rounding, even though this is not actually needed for our proof of the upper bound.}
\begin{align}
          x_1  & = 0
        & x_2  & = 0
        & x_3  & = 0.0000020
        & x_4  & = 0.0000020
        \nonumber \\
        x_5  & = 0.0000340
        & x_6  & = 0.0000500
        & x_7  & = 0.0000580
        & x_8  & = 0.0000620
        \nonumber \\
        x_9  & = 0.0000620
        & x_{10} & = 0.5000630
        & x_{11} & = 0.7500635
        & x_{12} & = 0.9918720
        \nonumber \\
        x_{13} & = 0.9959680
        & x_{14} & = 0.9980160
        & x_{15} & = 0.9990400
        & x_{16} & = 0.9995520
        \nonumber \\
        x_{17} & = 0.9998080
        & x_{18} & = 0.9999360
        & x_{19} & = 1
        & x_{20} & = 1.
        \label{solution_milp}
\end{align}
The number of linear inequalities \eqref{ineq_threshold}
is $|\mathcal T|=9690$. Among these, $8076$ inequalities are satisfied
and the remaining $1614$ inequalities are violated.
Following the argument for $m=20$ in \eqref{eq:six_ineqs},
this suggests an upper bound of
\begin{equation}
	\label{m16_upper_bound}
	\frac{8076}{19380}\approx 0.41672
\end{equation}
for the problem
\eqref{4_eq_1}, where the denominator is
\eqref{number_of_satisfiable_inequalities} for $m=20$.

\begin{remark}
    \label{rem:redundant}
	Resolution of the MILP \eqref{milp1} can be sped up in practice by adding redundant constraints.
	One family of such redundant constraint is
	\begin{align*}
		y_{(i+1,j,k,\ell)} & \le y_{(i,j,k,\ell)}, &
		y_{(i,j+1,k,\ell)} & \le y_{(i,j,k,\ell)}, \\
		y_{(i,j,k+1,\ell)} & \le y_{(i,j,k-1,\ell)}, &
		y_{(i,j,k,\ell-1)} & \le y_{(i,j,k,\ell)},
	\end{align*}
	for all tuples of indices $(i,j,k,\ell) \in \mathcal T$ for which this makes sense.
	For instance, the leftmost constraint holds because $X_{i+1}+X_j+X_k < 2X_{\ell}$ implies $X_i + X_j + X_k < 2 X_{\ell}$ due to $X_i \le X_{i+1}$.
	Another family of redundant constraints that speeds up the MILP resolution consists of pre-computed subsystems of 2 or 3 inequalities that are unsatisfiable.
	For instance, if $\mathcal A\subseteq\mathcal T^2$ is a precomputed subset of pairs of versions of the inequality that are not jointly satisfiable, and $\mathcal B\subseteq\mathcal T^3$ a precomputed subset of triples of versions of the inequality that are not jointly satisfiable, we may add the redundant constraints
	$$
	y_s + y_t \le 1 ~\text{ for each }~ (s,t)\in \mathcal A, \qquad y_u + y_v + y_w \le 2 ~\text{ for each }~ (u,v,w) \in\mathcal B.
	$$
	These redundant constraints let the Gurobi solver terminate for $m=20$ in a few seconds.
	Even for $m=25$, it terminates overnight and produces the tighter upper bound given by $\frac{20898}{50600} \approx 0.413004$.
\end{remark}

If the MILP terminates, its optimal value tells us the maximal number of jointly satisfiable inequality versions
$x_i+x_j+x_k\le 2 x_\ell - \text{threshold}$.
However, due to the threshold, this does not necessarily coincide with the maximal number of jointly satisfiable strict inequalities we are interested in.
We therefore do not yet get a formal proof of the upper bound in \eqref{4_eq_1} from the MILP solution.

In order to formally prove the upper bound in \eqref{4_eq_1},
we mimic the strategy explained in \eqref{eq:six_ineqs} as follows.
From the solution to \eqref{milp1},
we obtain a formally provable upper bound using a second MILP, whose goal
is to output disjoint subsystems of inequalities
as in \eqref{eq:six_ineqs}, with each subsystem unsatisfiable.
The solution to the MILP \eqref{milp1} above provides 
$\mathcal T = \mathcal T^s \cup \mathcal T^v$, where
$\mathcal T^s$ contains the tuples $t=(i,j,k,\ell)$ corresponding to inequalities \eqref{ineq_threshold}
satisfied by the solution, while
$\mathcal T^v$ contains those that are
violated. The second MILP we consider is the following:
\begin{align}
    \mbox{find any feasible}\quad& 
    y_{tu}\in\{0,1\} &&\text{for each }(t, u)\in\mathcal T^s \times \mathcal T^v,
    \nonumber
    \\[2pt]
    \mbox{subject to}\quad&
    \sum_{u \in \mathcal T^v} y_{tu} \le 1 &&\text{for each } t \in \mathcal T^s,
    \label{milp2}
    \\
    \nonumber
   &F(q, u)
    +
    \sum_{t\in\mathcal T^s}
    F(q, t)
    y_{tu}
    \ge 0
   &&\text{for each $q\in[m]$ and $u \in\mathcal T^v$},
\end{align}
where
$F(q, t)$ is defined
for general $t=(i,j,k,\ell)\in\mathcal T$
by
\[
	F(q, t) \coloneqq
    \delta_{q\le i}
    + \delta_{q\le j}
    + \delta_{q\le k}
    - 2 \delta_{q\le \ell}.
\]
The reason for considering this MILP is as follows.

\begin{lemma}
	\label{lem:subsystems}
	If the MILP \eqref{milp2} is feasible, then $\mathcal T^s$ is the set of indices of a maximal feasible subsystem of the inequalities $X_i + X_j + X_k < 2 X_\ell$.
\end{lemma}

\begin{proof}
We explain how a feasible solution amounts to a certificate of maximality for $\mathcal T^s$.
As mentioned, the idea is analogous to the argument for $m=6$ in \eqref{eq:six_ineqs}.

Given a feasible solution $(y_{tu})_{t\in\mathcal T^s, u\in\mathcal T^v}$, for each $u \in T^v$ we consider the inequality associated to $u$
together with all those associated to the $t \in \mathcal T^s$ with $y_{tu}=1$.
Then the first constraint $\sum_{u\in\mathcal T^v} y_{tu} \le 1$ for every $t$ is precisely what enforces these systems of inequalities to be disjoint as $u$ varies.

The second constraint involving the function $F$
ensures that for each $u$, the system of inequalities described in the previous paragraph is infeasible.
The idea is that as in~\eqref{eq:six_ineqs}, one simply sums these inequalities and uses $0 \le x_1\le x_2\le \dots \le x_m$ to arrive at a contradiction.
To see how this works, fix some $u\in\mathcal T^v$ and sum the inequalities 
\begin{equation}
    x_i+x_j+x_k - 2x_l < 0  \quad\text{ for }\quad (i,j,k,l) 
\in
\{u\} \cup \{t\in\mathcal T^s:y_{tu}=1\}
\label{inequalities_to_sum}
\end{equation}
each with equal weight 1. If all inequalities in \eqref{inequalities_to_sum}
hold, this implies that 
\begin{equation}
	\label{eq:contradiction_ineq}
	a_1 x_1 + a_2 x_2 + \dots + a_m x_m < 0
\end{equation}
for $a_i = F(i, u) - F(i-1,u) + \sum_{t\in\mathcal T^s: y_{tu}=1} F(i, t) - F(i-1, t)$
for each $i=1,\dots,m$. To find a contradiction,
since all we know about $(x_i)_{i=1}^m$ is that they are
non-negative and non-decreasing, 
we rewrite the left-hand
side above as a linear combination of the $(x_q-x_{q-1})_{q=1}^m$, 
where $x_0 \coloneqq 0$ for convenience. Denoting by
\[
S_q \coloneqq F(q, u) + \sum_{t \in\mathcal T^s : \: y_{tu}=1} F(q, t) = a_q + \dots + a_m 
\]
the left-hand side of the constraint in the MILP \eqref{milp2},
this takes the form
\[
	\sum_{q=1}^m a_q x_q
	= \sum_{q=1}^m S_q x_q - \sum_{q=1}^{m-1} S_{q+1} x_q
	= \sum_{q=1}^m S_q (x_q - x_{q-1}).
\]
Since $S_q \ge 0$ thanks to the MILP constraint,
and $x_q - x_{q-1} \ge 0$ for all $q \in [m]$, we obtain that the left-hand side of~\eqref{eq:contradiction_ineq} is non-negative, contradicting the strict inequality.
Therefore for every monotone assignment of numbers to the variables $x_1, \dots, x_m$, at least one of the strict inequalities in \eqref{inequalities_to_sum} we summed up must be violated.

To sum up, we have $|\mathcal T^v|$ disjoint systems of inequalities, each of which is infeasible.
Therefore any feasible subsystem of the original inequalities must violate at least one inequality in each of these systems.
Thus the maximal number of inequalities that can be satisfied is $|\mathcal{T} \setminus \mathcal{T}^v| = |\mathcal T^s|$, as was to be shown.
\end{proof}

There is no reason, a priori, for the MILP \eqref{milp2} to be feasible
because summing the inequalities with uniform weights as we do above may not be a necessary
condition for the corresponding subsystem to be infeasible.\footnote{By Farkas' lemma, summing with general nonnegative coefficients gives a necessary and sufficient condition.}
In other words, one would not expect the converse of \Cref{lem:subsystems} to hold in general, unless there is additional structure in the systems under consideration that would allow a reduction to the case of uniform weights.

Again we have implemented this MILP in Python using the Gurobi solver, and our implementation \texttt{milp2.py} is available on the arXiv together with this paper.
Taking $m = 20$ and using the
set of inequalities $\mathcal{T}^s \cup \mathcal{T}^v$ returned by the first MILP, which we make available as \texttt{ineqs\_m\_is\_20.log},
the MILP \eqref{milp2} terminates and outputs
a feasible solution $(y_{tu})_{t\in\mathcal T^v, u\in\mathcal T^s}$.
Hence \Cref{lem:subsystems} applies and provides a certificate of maximality for $\mathcal T^s$, consisting of $|\mathcal T^v| = 1614$ disjoint subsystems of inequalities, each infeasible.
For a formal proof of maximality, one can now also check the infeasibility of each subsystem by summing the corresponding inequalities, and verifying in addition that these systems are disjoint.
Since $|\mathcal T^s| = 8076$ and
$\frac{8076}{19380} = \frac{673}{1615}$,
this finishes the proof of the upper bound in~\eqref{4_eq_1}.\footnote{Similarly for $m = 6$, our implementation recovers the two systems displayed in \eqref{eq:six_ineqs}, which we had used to prove the upper bound of $\frac{28}{60}$.}
We offer the disjoint systems of inequalities produced by the second MILP as \texttt{optimality\_witness\_m\_is\_20.log}, and a script \texttt{verify\_witness.py} to check their disjointness and infeasibility under the monotonicity constraint.

Although the first MILP has terminated for up to $m = 25$, the second MILP has only terminated up to $m = 20$ so far, which is why we do not yet have a rigorous proof of the improved upper bound mentioned in \cref{rem:redundant}.

\section{Open problems}
\label{section_open_problems}

\subsection{The lower bound conjecture}

Perhaps the most interesting open question is to determine the exact value of~\eqref{4_eq_1}.

\begin{conj}
	\label{conj:lower_bound_tight}
	The lower bound of \cref{prop:lower_bound_04} is the exact value:
	\[
		\sup_\mu \P[\mu^{\otimes 4}]{X_1 + X_2 + X_3 < 2 X_4} = \sup_{p \in (0,1)} \frac{p(2-p)}{1 + p + p^2 + p^3}.
	\]
\end{conj}

The following piece of evidence makes this plausible.
In the proof of \Cref{prop:lower_bound_04}, the distributions achieving the lower bound
were obtained
by combining $\delta_0$ with a uniform distribution
over the points $(1-2^{-i})_{i=1}^N$ and repeating this pattern
at a smaller scale using the construction of \Cref{strict_ineq_geom}.
The striking observation is now that this is mirrored in the output of the first MILP \eqref{milp1}.
Despite having no knowledge of our strategy involving the pattern of the distributions which led to the lower bound,
the solutions to the first MILP \eqref{milp1} often mimic this pattern: for example at $m = 15$, up to rescaling by $1.0030$ and small numerical discrepancies, we have at the top scale
\[
	x_8 = 1 - 2^{-1}, \qquad x_9 = 1 - 2^{-2}, \qquad \dots, \qquad x_{14} = 1 - 2^{-7},
\]
and at the next scale,
\[
	x_4 = (1 - 2^{-1}) \cdot \eta, \qquad x_5 = (1 - 2^{-2}) \cdot \eta, \qquad x_6 = (1 - 2^{-3}) \cdot \eta,
\]
with $\eta = 0.00016$ as in~\Cref{prop:lower_bound_04}.
On the other hand, $x_7 = x_6$ breaks the pattern---this might
be due to boundary effects as $x_7$ is the last point
before the upper scale.

The same phenomenon can be seen in the solutions of the first MILP \eqref{milp1}
for other values of $m$, as long as $m$ is not too small to reach the second scale.
For example, it is also present in the $m = 20$ solution of \eqref{solution_milp}, although it is less pronounced there.
For $m=25$, termination of the MILP \eqref{milp1} with the redundant
constraints of \Cref{rem:redundant} outputs the optimal solution
(rounded for presentation):
\begin{align*}
    \text{\underline{Scale 4}: }~~ x_1 &= 0, & x_2 &= 0.
                        \\
    \text{\underline{Scale 3}: }~~x_3 &= 4 \cdot 10^{-7}, & x_4 &= 6 \cdot 10^{-7} & x_5 &= 6 \cdot 10^{-7}.
                        \\
    \text{\underline{Scale 2}: }~~x_6 & = 6.1 \cdot 10^{-5}, & x_7 & = 9.2 \cdot 10^{-5}, & x_8 & = 10.7 \cdot 10^{-5}, \\
			x_9 & = 11.5 \cdot 10^{-5} & x_{10} & = 12.2 \cdot 10^{-5}, & x_{11} & = 12.2 \cdot 10^{-5}.
                        \\
                        \text{\underline{Scale 1}: }
                        x_{12} & = 0.5001, & x_{13} & = 0.7502, & x_{14} & = 0.8751, \quad x_{15}  = 0.9376, \\
			x_{16} & = 0.9688, & x_{17} & = 0.9845, & x_{18} & = 0.9923, \quad x_{19}  = 0.9962, \\
			x_{20} & = 0.9981, & x_{21} & = 0.9991, & x_{22} & = 0.9996, \quad x_{23}  = 0.9998, \\
			x_{24} & = x_{25} = 1.
\end{align*}
Again, the MILP solution divides the variables into different scales,
and within one scale the variables roughly follow the exponential spacing
pattern $\eta \cdot (1 - 2^{-i})$.
While not formal evidence, the fact that the MILP solutions mimic
both the multiple-scales and the exponential spacing pattern
of the lower bound  \eqref{eq:lower_bound}
supports \cref{conj:lower_bound_tight}.

By using faster computers than the desktop used to solve \eqref{milp1},
it is likely that the upper bound can be pushed down
a little further by solving the MILPs \eqref{milp1} and \eqref{milp2}.
However, the numbers of variables and constraints grow quickly with $m$,
since the set $\mathcal T$ in \eqref{def_mathcal_T}
has cardinality of order $m^4$. The techniques in \Cref{rem:redundant}
help to terminate the first MILP \eqref{milp1} up to $m = 25$,
suggesting an upper bound of $4.13004$.
For $m=25$ though, the second MILP \eqref{milp2} is the bottleneck as it does not terminate in less than a day for $m>20$.
Smarter MILPs than the ones used above may also push the upper
bound further down, perhaps by leveraging more of the symmetries of the
problem.
However, by nature these numerical methods are unable to determine the
theoretical limit of this strategy as $m\to+\infty$ and
see if it converges to the lower bound \eqref{eq:lower_bound}.

We are not aware of existing mathematical ideas that could solve \cref{conj:lower_bound_tight},
either building on the exchangeability
strategy of the previous sections or by leveraging completely different
arguments.
We hope that the present work, and the apparent difficulty 
of determining $\sup_{\mu}\P[\mu^{\otimes 4}]{X_1+X_2+X_3<2X_4}$,
will raise interest in problems of this type
and lead to new arguments resolving \cref{conj:lower_bound_tight}.

\subsection{Attainability of the supremum}

Another interesting open question is the following.

\begin{prob}
	\label{prob:open}
	For coefficients $c_1, \dots, c_n \in \R$ with $\sum_{i=1}^n c_i < 0$, when is the supremum $\sup_{\mu} \P[\mu^{\otimes n}]{\sum_{i=1}^n c_i X_i > 0}$ achieved, where $\mu$ ranges over probability measures on $\R_+$?
\end{prob}

As a first observation, if $\mu$ has an atom at $0$, then \Cref{strict_ineq_geom} shows that it is not optimal.
Therefore any putative optimizer must satisfy $\mu(\{0\}) = 0$.
Furthermore an optimal $\mu$ cannot be finitely supported, since then (as already noted in \Cref{example}) one can assume $\mu$ to have an atom at $0$, which we have just ruled out.


%

\subsection{Fair games}

Instead of focusing on fixed coefficients $c_1,\dots,c_n$, one can ask
for specific coefficients that make games like Beat the Average fair.
\begin{prob}
    Determine a real $c_*\in(2,3)$ such that
    \begin{equation}
        \sup_{\mu} \P[\mu^{\otimes 4}]{X_2 + X_2 + X_3 < c_* X_4} = \frac 12,
        \label{sup_fiair_game_c_*}
    \end{equation}
    where $\mu$ ranges over all probability measures on $\R_+$, or show that no such $c_*$ exists.
\end{prob}

It has been observed by Vincent Yu\footnote{See \href{https://mathoverflow.net/a/484853/27013}{mathoverflow.net/a/484853/27013}.} that the left-hand side of~\eqref{sup_fiair_game_c_*} is not continuous in $c_*$, so that the intermediate value theorem does not apply.
	To see that continuity fails for example at $c_* = 2$, take the Bernoulli measure $\mu = (1 - p) \delta_0 + p \delta_1$ for any $c_* > 2$.
	This makes the inequality fail only if $X_4 = 0$ or $X_1 = X_2 = X_3 = X_4 = 1$, so that
	\begin{align*}
            \P[\mu^{\otimes 4}]{X_1 + X_2 + X_3 < c_* X_4} &= 1 - (1 - p) - p^4,
            \\ \sup_\mu \P[\mu^{\otimes 4}]{X_1 + X_2 + X_3 < c_* X_4} &\ge \sup_{p \in [0,1]} \left( p - p^4 \right) = \tfrac{3}{4^{4/3}} > 0.472.
	\end{align*}
	Since this is larger than our upper bound~\eqref{4_eq_1} at $c_* = 2$, we conclude that the continuity fails at $c_* = 2$.

\subsection{Obtuse random triangles}

Consider three independent and identically distributed random points in $\R^2$ with distribution $\mu$.
These three points form a triangle (possibly degenerate).
What is the probability for this triangle to be obtuse, i.e., to have an angle larger than $\pi/2$?
This question goes back to Lewis Carroll's \emph{Pillow Problems} from 1893~\cite{portnoy}.

Of course, the answer will depend on the distribution $\mu$, so a more well-defined question is: how large and how small can one make the probability of an obtuse triangle by varying $\mu$? This was studied very recently by Drivas and Retakh~\cite{DR}.
They first observe that by taking $\mu$ to be supported on an arc smaller than a semicircle, one can achieve probability $1$ for an obtuse triangle.
In the other direction, they prove that the minimal probability lies in the interval $[\frac{1}{3}, \frac{4}{9}]$, where the exact value remains open.

Given that the condition for a triangle to be obtuse also takes the form of a strict inequality, it is natural to ask whether the techniques of the present paper can be adapted to this problem.\footnote{We thank Daniel Lacker for finding~\cite{DR} and suggesting this question to us.}
Note that the precise methods we used in~\Cref{example} do not apply directly, since the inequality characterizing obtuseness is not linear in the coordinates of the points.

\printbibliography

\end{document}